\numberwithin{equation}{section} 
\newcommand{\Mod}[1]{\ (\mathrm{mod}\ #1)}
\newcommand{\RNum}[1]{\uppercase\expandafter{\romannumeral #1\relax}}
\theoremstyle{plain} 
\newtheorem{theorem}{Theorem}[section]
\newtheorem{lemma}{Lemma}[section]
\newtheorem*{thm}{Theorem}
\theoremstyle{definition}
\newtheorem{rek}{Remark}
\newtheorem{defn}{Definition}[section]
\newcommand{\bbn}{\mathbb{N}}
\newcommand{\bbz}{\mathbb{Z}}
\newcommand{\ds}{\displaystyle}
\newcommand{\li}{\operatorname{li}}
\newcommand{\imply}{\Longrightarrow\qquad}
\newcommand{\bburl}[1]{\textcolor{blue}{\url{#1}}}
\numberwithin{equation}{section}
\begin{document}
\author{Muhammet Boran}
\address{Department of Mathematics, Yıldız Technical University, 34220 Esenler, Istanbul, TURKEY}
\email{\bburl{muhammet.boran@std.yildiz.edu.tr}}

\author{John Byun}
\address{ Department of Mathematics and Statistics, Carleton College, One North College Street, Northfield, MN 55057, USA}
\email{\bburl{byunj@carleton.edu}}

\author{Zhangze Li}
\address{ Department of Mathematics, University of Michigan, Ann Arbor, MI 48109, USA}
\email{\bburl{zhangzli@umich.edu}}

\author{Steven J. Miller}
\address{Department of Mathematics and Statistics, Williams College, Williamstown, MA 01267, USA}
\email{\bburl{sjm1@williams.edu}}

\author{Stephanie Reyes}
\address{Institute of Mathematical Sciences, Claremont Graduate University, Claremont, CA 91711, USA}
\email{\bburl{stephanie.reyes@cgu.edu}}

\title{Sums of Powers of Primes in Arithmetic Progression}

\subjclass[2010]{Primary 11N13, Secondary 11N05.}
\keywords{Prime Number Theorem, Arithmetic Progression}
\thanks{This work was done in the 2023 Polymath Jr. program and was partially supported by NSF award DMS-2113535.}

\maketitle
\noindent

\begin{abstract}
Gerard and Washington proved that, for $k > -1$, the number of primes less than $x^{k+1}$ can be well approximated by summing the $k$-th powers of all primes up to $x$. We extend this result to primes in arithmetic progressions: we prove that the number of primes $p\equiv n \pmod m$ less than $x^{k+1}$ is asymptotic to the sum of $k$-th powers of all primes $p\equiv n \pmod m$ up to $x$. We prove that the prime power sum approximation tends to be an underestimate for positive $k$ and an overestimate for negative $k$, and quantify for different values of $k$ how well the approximation works for $x$ between $10^4$ and $10^8.$

\end{abstract}

\section{Introduction}
\subsection{Historical Context} 

Approximating the distribution of prime numbers is a major motivating question in number theory. This task can be broken down into counting primes up to a given number and estimating where prime numbers appear within the set of natural numbers. The latter is a much more difficult task and falls outside of the scope and context of our results. We instead extend a recent approximation for the prime counting function to primes in arithmetic progressions. We use analogous methods to the work of Gerard and Washington \cite{GW}, incorporating the findings from Page \cite{P} and Siegel \cite{Si} and applying the prime number theorem for arithmetic progressions where Gerard and Washington used the prime number theorem. While our methods are similar, extending them to primes in arithmetic progressions allows to investigate finer questions on the distributions of prime numbers, for example, by studying Chebyshev bias.

\begin{defn}
For $k\in \mathbb{R}$, define
\begin{equation*}
    \pi(x) \ := \ \sum\limits_{p\le x} 1\qquad \text{and}\qquad \pi_k(x) \ := \ \sum\limits_{p\le x} p^k,
\end{equation*}
where $p$ denotes a prime number here and throughout; $\pi(x)$ is the \emph{prime-counting function}. 
\end{defn}
\begin{defn}
    For real $x\ge 0$, define the \emph{logarithmic integral} 
    \begin{equation*}
        \li(x)\ :=\ \int_0^x \frac{dt}{\log t}.
    \end{equation*}
\end{defn}

\begin{defn}
    Let $f$ and $g$ be two functions defined on $\mathbb{R}_{>0}$, and $g(x)$ be strictly positive for all large enough values of $x$. We say
    \[f(x)\ = \ O(g(x)) \qquad \text{as} \qquad x\to\infty\]
    if there exist constants $M$ and $x_0$ such that
    \[|f(x)|\ \le \ M|g(x)| \qquad\text{for all}\qquad x\ \ge \ x_0. \]
    We also say
    \[f(x)\ll g(x)\qquad\text{is equivalent to}\qquad f(x)=O(g(x)).\]
\end{defn}
\begin{defn}\label{small-o}
    Let $f$ and $g$ be two functions defined on $\mathbb{R}_{>0}$, and $g(x)$ be strictly positive for all large enough values of $x$. We say
    \begin{equation*}
        f(x)\ = \ o(g(x)) \qquad \text{as} \qquad x\to\infty
    \end{equation*}
    if for every positive constant $\varepsilon$, there exists a constant $x_0$ such that
    \[|f(x)|\ \le \ \varepsilon g(x)\qquad\text{for all}\qquad x\ \ge \ x_0 \]
    and, equivalently,
    \[\lim_{x\to\infty} \frac{f(x)}{g(x)}\ = \ 0.\]
    We also say
    \[f(x)\lll g(x)\qquad\text{is equivalent to}\qquad f(x)=o(g(x)).\]
\end{defn}
\begin{defn}
    Given two functions $f$ and $g$, we say
    \[f(x)\sim g(x) \qquad \text{as} \qquad x\to\infty,\]
    read $f(x)$ \emph{is asymptotic to} $g(x)$, if
    \[\lim_{x\to\infty}\frac{f(x)}{g(x)}\ =\ 1\]
    and, equivalently, that
    \[f(x)\ =\ g(x)+O(h(x)).\]
\end{defn}

\noindent
In 1896, Vallée-Poussin \cite{V-P} proved that 
\begin{equation*}
    \pi(x) \sim \li(x) \qquad\text{as}\qquad x\to \infty,
\end{equation*}
which is known as the \textit{prime number theorem}, using techniques which relied on complex analysis and the Riemann $\zeta$-function. Vallée-Poussin \cite{V-P} also estimated the error term in the prime number theorem by proving that
\begin{equation}\label{prime-counting}
    \pi(x)\ =\ \li(x)+O\left(x \exp{\left(-c\sqrt{\log x}\right)}\right)\ \ (x\geq 2)
\end{equation}
where $c$ is a positive constant. Further improvements on the error term began to emerge. In 1922, Littlewood (outlined in \cite{Ku}) proved that
\begin{equation*}
    \pi(x)-\li(x) \ll x \exp{\left( -c \sqrt{\log x\log \log x}\right)}\ \ (x\geq 2).
\end{equation*}
In 1958, Korobov \cite{Ko} and Vinogradov \cite{V} proved that
\begin{equation*}
    \pi(x)-\li(x) \ll x \exp{ \left( -c (\log x)^{3/5} (\log \log x)^{-1/5} \right) }\ \ (x\geq 2).
\end{equation*}

\noindent Since then, various alternative proofs have been given which do not use complex analysis, with perhaps the most renowned being the proof by Erd\H{o}s and Selberg \cite{Se}.\\

\noindent A natural immediate extension of approximating the number of primes up to a certain real number is estimating the \emph{sum of primes} up to a certain real number. Using Vall\'ee-Poussin's result [Equation (\ref{prime-counting})], Szalay \cite[Lemma 1]{Sz} found the following approximation for the sum of primes less than a real number $x\geq 0$:
\begin{equation}
    \pi_1(x) \ =\ \li\left(x^2\right)+O\left(x^2 \exp{\left(-c\sqrt{\log x}\right)} \right).\label{first-est-k=1}
\end{equation}

\noindent
In 1996, Massias and Robin \cite[p. 217]{MR} approximated the sum of prime numbers less than a real number $x\geq 0$ as 
\begin{equation*}
    \pi_1(x)\ =\ \li\left(\li^{-1}(x)^2\right)+O\left(x^2 \exp{\left(-\gamma\sqrt{\log x}\right)} \right)
\end{equation*} 
where $\gamma$ is a positive constant and $\li^{-1}(x)$ denotes the inverse of $\li(x)$.\\

\noindent Massias and Robin \cite[Théorème D(v)]{MR} further bounded $\pi_1(x)$ above for every $x \geq 24281$ by
\begin{equation}\label{ineq}
    \pi_1(x)\  \leq\  \frac{x^2}{2 \log x} + \frac{3x^2}{10 \log^2 x}.
\end{equation}

\noindent
Around three decades following the publication of Szalay's result [Equation (\ref{first-est-k=1})], Axler \cite{Ax} applied it to show that 
\begin{equation*}
    \pi_1(x) \ =\ \frac{x^2}{2\log x}+\frac{x^2}{4\log^2 x}+\frac{x^2}{4\log^3 x}+\frac{3x^2}{8\log^4 x}+O\left(\frac{x^2}{\log^5 x}\right).
\end{equation*}
Axler \cite[Theorem 1.1]{Ax} also improved on the upper bound given by Massias and Robin [Line (\ref{ineq})] to give the following inequality 
\begin{equation*}
    \pi_1(x)\  <\ \frac{x^2}{2\log x}+\frac{x^2}{4\log^2 x}+\frac{x^2}{4\log^3 x}+\frac{5.3x^2}{8\log^4 x},
\end{equation*}
which holds for every $x \geq 110118925$.
For $x\geq 905238547$, Axler \cite[Theorem 1.2]{Ax} also gives a lower bound of $\pi_1(x)$, which differs from the upper bound only by a constant in the fourth term from the upper bound.
\begin{equation*}
    \pi_1(x) \  >\  \frac{x^2}{2\log x}+\frac{x^2}{4\log^2 x}+\frac{x^2}{4\log^3 x}+\frac{1.2x^2}{8\log^4 x}.
\end{equation*}

\noindent
In 2016, shortly following Axler's preprint, Gerard and Washington \cite[Theorem 1]{GW} generalized these results to sums of fixed powers $k$ of primes. They proved that $\pi_k(x)$ is asymptotic to $\pi(x^{k+1})$:
    \begin{align*}
        \pi_k&(x)-\pi(x^{k+1}) \\
        &=
        \begin{cases}
            O\left( x^{k+1} \exp \left( -A(\log x)^{3/5} (\log \log x)^{-1/5}\right) \right) & \text { if } k>0,\\ 
            O\left(x^{k+1} \exp \left( -(k+1)^{3/5}A(\log x)^{3/5} (\log \log x)^{-1/5}\right) \right) & \text { if } -1<k<0,
        \end{cases}
    \end{align*}
    where $A=.2098$. Their result is significant both from the perspective of studying the prime counting function and from the perspective of studying sums of powers of primes, as it relates the two functions with small relative error for large $x$. From the perspective of studying the prime counting function, their result allows for approximating $\pi(x)$ by $\pi_k\left(x^{1/(k+1)}\right)$, which can significantly reduce the number of primes used in the approximation.
    
\subsection{Statement of Main Results}
Throughout this paper, we take $m,n\in\bbz_{>0}$ to be two coprime positive integers, specifically with $n$ a unit in $\bbz/m\bbz$. As mentioned previously, we use $p$ to denote prime numbers throughout.\\

\noindent To extend previous results, we first define the sum of $k$-powers of primes for arithmetic progressions.
    \begin{defn}
        For a fixed $k\in \mathbb{R}$, define
       \[\pi(x;m,n) \ = \ \sum\limits_{\substack{p\le x\\ p\equiv n\pmod{m}}} 1\qquad \text{and}\qquad \pi_k(x;m,n) \ = \ \sum\limits_{\substack{p\le x\\ p\equiv n\pmod{m}}} p^k.\]
    \end{defn}

    \noindent
    Also, recall the definition of Euler’s totient function: 
    \[\varphi(m)\ =\ \#\{x\in\bbn\ :\ x<m\ \text{and}\ \gcd(x,m)=1\},\]
    which counts the number of positive integers up to $m$ which are coprime to $m$.\\
    
    \noindent We extend the work of Gerard and Washington \cite{GW}, studying sums of powers of primes \emph{in arithmetic progressions}. Dirichlet's theorem on arithmetic progressions states that there are infinitely many primes $p$ in the form $p\equiv n\Mod m$. Thus, $\pi(x;m,n)$ grows to $\infty$ as $x$ approaches $\infty$. Also, Vallée-Poussin \cite{V-P} proved the \emph{prime number theorem on arithmetic progressions}, concluding that
    \begin{equation}
        \pi(x;m,n)\ \sim\ \frac{\pi(x)}{\varphi (m)}\ \sim\ \frac{x}{\varphi (m) \log x}  \qquad\text{as}\qquad x\to \infty,\label{dric:eqn}
    \end{equation}
    where $\gcd(m,n)=1$. A proof of (\ref{dric:eqn}) is outlined in \cite[p. 154-155]{Ap}. In 1935, Page \cite{P} proved that there exists a positive constant $\delta$ such that 
    \begin{equation*}
        \pi(x;m,n)\ =\ \frac{\li(x)}{\varphi (m)}+ O\left(x \exp{\left(-(\log x)^{\delta}\right)}\right)\ \ (x\geq 2), 
    \end{equation*}
    whenever $1\leq m\leq (\log x)^{2-\delta}$. In the same year, Siegel \cite{Si} proved that there exists a positive constant $\xi_\eta$ depending on $\eta$ such that
    \begin{equation*}
        \pi(x;m,n)\ =\ \frac{\li(x)}{\varphi (m)}+ O\left(x \exp{\left(-\frac{\xi_{\eta}}{2}\sqrt{\log x}\right)}\right)\ \ (x\geq 2),
    \end{equation*}
    whenever $1\leq m\leq (\log x)^{\eta}$. We use the above version of the prime number theorem on arithmetic progressions to obtain our main result.\\
    
    \noindent Following the methods of Gerard and Washington in \cite{GW} we obtain the following theorem.
    \begin{theorem}
    \label{1:thm}
    Fix a real number $k>-1$ and positive integers $m,n\in\bbz_{>0}$ such that $\gcd(m,n)=1$. Then we can approximate the number of primes $p\equiv n\pmod{m}$ less than $x^{k+1}$ by the sum of $k$-powers of primes $p\equiv n\pmod{m}$ less than a real number $x$:
    \begin{align*}
        \pi_k&(x;m,n)-\pi(x^{k+1};m,n) \\
        &=
        \begin{cases}
            O\left(x^{k+1} \exp\left(-\frac{1}{2}\alpha \sqrt{\log x}\right)\right) & \text { if } k>0,\\ 
            O\left(x^{k+1} \exp\left(-\frac{1}{2}\alpha \sqrt{(k+1)\log x}\right)\right) & \text { if } -1<k<0,
        \end{cases}
    \end{align*}
    where $\alpha$ is a positive constant.
    \end{theorem}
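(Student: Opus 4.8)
The plan is to pass from the power sum to the counting function by Abel summation, feed in Siegel's version of the prime number theorem for arithmetic progressions, and observe that the main term collapses to $\li(x^{k+1})/\varphi(m)$ after a change of variables. Writing the sum as a Riemann--Stieltjes integral and integrating by parts gives
\[
\pi_k(x;m,n)\ =\ \int_1^x t^k\,d\pi(t;m,n)\ =\ x^k\,\pi(x;m,n)\ -\ k\int_1^x t^{k-1}\,\pi(t;m,n)\,dt,
\]
using $\pi(1;m,n)=0$. Siegel's estimate $\pi(t;m,n)=\li(t)/\varphi(m)+O\!\bigl(t\exp(-\beta\sqrt{\log t})\bigr)$, with $\beta=\xi_\eta/2$, is only available once $m\le(\log t)^\eta$, that is, for $t\ge T_0$ where $T_0=T_0(m,\eta)$ is a constant; so I would first peel off the range $t\le T_0$, bounding it trivially via $\pi(t;m,n)\le t$, which contributes only $O(1)$ since $m$ is fixed and $k>-1$. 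On $[T_0,x]$ I substitute Siegel's formula and split the result into a main part coming from $\li(t)/\varphi(m)$ and an error part coming from $E(t):=\pi(t;m,n)-\li(t)/\varphi(m)$.

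For the main part, $\int_{T_0}^x t^k\,d\li(t)=\int_{T_0}^x \tfrac{t^k}{\log t}\,dt$, so integrating by parts in the reverse direction shows
\[
\frac{1}{\varphi(m)}\Bigl(x^k\li(x)-k\int_{T_0}^x t^{k-1}\li(t)\,dt\Bigr)\ =\ \frac{1}{\varphi(m)}\int_{T_0}^x\frac{t^k}{\log t}\,dt\ +\ O(1).
\]
Since $k>-1$, the substitution $u=t^{k+1}$ is legitimate --- the variable $u$ stays above $1$, so no singularity of $1/\log u$ is crossed --- and it turns the last integral into $\li(x^{k+1})-\li(T_0^{k+1})$. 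Applying Siegel's estimate a second time, now at $x^{k+1}$ (valid for all large $x$ because $m$ is fixed), replaces $\li(x^{k+1})/\varphi(m)$ by $\pi(x^{k+1};m,n)+O\!\bigl(x^{k+1}\exp(-\beta\sqrt{(k+1)\log x})\bigr)$; this is the source of the factor $k+1$ under the square root in the range $-1<k<0$.

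It remains to estimate the error part, $x^kE(x)$ and $k\int_{T_0}^x t^{k-1}E(t)\,dt$, where $E(t)\ll t\exp(-\beta\sqrt{\log t})$. The first is at once $O\!\bigl(x^{k+1}\exp(-\beta\sqrt{\log x})\bigr)$. The integral is $\ll\int_{T_0}^x t^k\exp(-\beta\sqrt{\log t})\,dt$, which I would split at $\sqrt x$: on $[T_0,\sqrt x]$ discard the exponential and integrate $t^k$ to get $O(x^{(k+1)/2})$, which is $\lll x^{k+1}\exp(-\gamma\sqrt{\log x})$ for every $\gamma>0$ since $(k+1)/2>0$; on $[\sqrt x,x]$ pull out $\exp(-\beta\sqrt{\log\sqrt x})=\exp(-\tfrac{\beta}{\sqrt2}\sqrt{\log x})$ and integrate $t^k$ to get $O\!\bigl(x^{k+1}\exp(-\tfrac{\beta}{\sqrt2}\sqrt{\log x})\bigr)$. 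Assembling the pieces, $\pi_k(x;m,n)-\pi(x^{k+1};m,n)$ is $O(1)+O\!\bigl(x^{k+1}\exp(-\tfrac{\beta}{\sqrt2}\sqrt{\log x})\bigr)+O\!\bigl(x^{k+1}\exp(-\beta\sqrt{(k+1)\log x})\bigr)$, the term $x^kE(x)$ being absorbed by the middle one; for $k>0$ the middle term dominates, since $k+1>1$ makes the last one smaller, while for $-1<k<0$ all three are absorbed into $x^{k+1}\exp(-\tfrac{\alpha}{2}\sqrt{(k+1)\log x})$. Taking $\alpha=\xi_\eta/\sqrt2$ (not optimized) then yields the two bounds in the statement.

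I expect the real work to be bookkeeping rather than a single hard idea: carrying the constants correctly through the two applications of Siegel's theorem and through the split of the error integral, and checking that one value of $\alpha$ serves both regimes $k>0$ and $-1<k<0$. The one genuinely structural step is the observation that the main term telescopes \emph{exactly}, via $u=t^{k+1}$, into $\li(x^{k+1})/\varphi(m)$; once that is in hand, the remainder is estimation of error integrals in the style of Gerard and Washington.
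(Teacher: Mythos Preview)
Your proposal is correct and follows essentially the same route as the paper: express $\pi_k(x;m,n)$ as a Riemann--Stieltjes integral against $d\pi(t;m,n)$, feed in the Siegel--Walfisz form of the prime number theorem for arithmetic progressions, and collapse the main term to $\li(x^{k+1})/\varphi(m)$ via the substitution $u=t^{k+1}$. The only notable variation is in bounding the error integral $\int t^k\exp(-\beta\sqrt{\log t})\,dt$: the paper invokes Gerard--Washington's Lemma~1 directly, whereas your split at $\sqrt{x}$ is more self-contained but costs a harmless factor of $\sqrt{2}$ in the exponent (and your care with the threshold $T_0$ is unnecessary here since $m$ is fixed).
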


\noindent    
Additionally, generalizing the proof of \cite[Theorem 2]{LW}, we prove the following.
\begin{theorem}\label{thm:error-pi-mn}
    Fix a real number $k>-1$ and positive integers $m,n\in\bbz_{>0}$ such that $\gcd(m,n)=1$. Then
    \begin{equation*}
        \int_1^\infty \frac{\pi_k(t;m,n)-\pi(t^{k+1};m,n)}{t^{k+2}}\, dt\ =\ -\frac{\log(k+1)}{(k+1) \varphi(m)}.
    \end{equation*}
    
    \noindent Further,
    \begin{align*}
    -\frac{\log(k+1)}{(k+1) \varphi(m)}\ &<\ 0\qquad (k>0)\\
    -\frac{\log(k+1)}{(k+1) \varphi(m)}\ &>\ 0\qquad (-1<k<0).\\
    \end{align*}
    \end{theorem}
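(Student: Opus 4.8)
The plan is to interchange the order of summation and integration, which collapses the integral to a difference of partial sums of reciprocals of primes in the residue class $n\bmod m$, and then to evaluate that difference by Mertens' theorem for arithmetic progressions. As in \cite[Theorem 2]{LW}, the essential point is that one must \emph{not} split the integrand into its two natural pieces, since each piece has a divergent integral; only the difference converges.

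To see that the improper integral is well defined, observe that by Theorem \ref{1:thm} the numerator satisfies $\pi_k(t;m,n)-\pi(t^{k+1};m,n)=O\!\left(t^{k+1}\exp(-c\sqrt{\log t})\right)$ for a suitable constant $c=c(k,m,n)>0$, so the integrand is $O\!\left(t^{-1}\exp(-c\sqrt{\log t})\right)$; the substitution $u=\log t$ turns $\int_1^\infty t^{-1}\exp(-c\sqrt{\log t})\,dt$ into $\int_0^\infty e^{-c\sqrt u}\,du<\infty$. Hence the integral equals $\lim_{X\to\infty}I(X)$, where $I(X)$ denotes the truncated integral $\int_1^X$, and it suffices to compute this limit.

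Fix $X$. Writing $\pi_k(t;m,n)=\sum_{p\le t,\ p\equiv n\,(m)}p^k$ and $\pi(t^{k+1};m,n)=\sum_{q\le t^{k+1},\ q\equiv n\,(m)}1=\sum_{q^{1/(k+1)}\le t,\ q\equiv n\,(m)}1$, and noting that for each $t\le X$ these sums have only finitely many nonzero terms, I may interchange summation and integration over $[1,X]$. Using $\int_p^X t^{-(k+2)}\,dt=\frac{1}{k+1}\!\left(p^{-(k+1)}-X^{-(k+1)}\right)$ and the analogous formula for $\int_{q^{1/(k+1)}}^X t^{-(k+2)}\,dt$ (here $q^{1/(k+1)}>1$ because $k+1>0$), this yields
\begin{align*}
I(X) &= \frac{1}{k+1}\sum_{\substack{p\le X\\ p\equiv n\,(m)}}\frac{1}{p} - \frac{1}{k+1}\sum_{\substack{q\le X^{k+1}\\ q\equiv n\,(m)}}\frac{1}{q} \\
&\qquad {}- \frac{\pi_k(X;m,n)}{(k+1)X^{k+1}} + \frac{\pi(X^{k+1};m,n)}{(k+1)X^{k+1}}.
\end{align*}
The last two terms are $o(1)$: from $\pi(y;m,n)\le\pi(y)\ll y/\log y$ and Theorem \ref{1:thm} one gets $\pi_k(X;m,n)\ll X^{k+1}/\log X$ and $\pi(X^{k+1};m,n)\ll X^{k+1}/\log X$, so both quotients tend to $0$.

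Finally, I would evaluate $S(X):=\sum_{p\le X,\ p\equiv n\,(m)}\frac{1}{p}-\sum_{q\le X^{k+1},\ q\equiv n\,(m)}\frac{1}{q}$ via Mertens' theorem for arithmetic progressions (partial summation against the prime number theorem for arithmetic progressions): there is a constant $B(m,n)$ with
\[\sum_{\substack{p\le y\\ p\equiv n\,(m)}}\frac{1}{p}=\frac{\log\log y}{\varphi(m)}+B(m,n)+o(1)\qquad(y\to\infty).\]
The constants $B(m,n)$ cancel in $S(X)$, and since $\log\log(X^{k+1})=\log\!\big((k+1)\log X\big)=\log(k+1)+\log\log X$, we get $S(X)=-\log(k+1)/\varphi(m)+o(1)$; hence $I(X)=\frac{1}{k+1}S(X)+o(1)\to-\log(k+1)/\big((k+1)\varphi(m)\big)$, which is the asserted value. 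The sign statements are then immediate: for $k>0$ we have $k+1>1$, so $\log(k+1)>0$, while for $-1<k<0$ we have $0<k+1<1$, so $\log(k+1)<0$, and $(k+1)\varphi(m)>0$ in both ranges. The interchange of sum and integral and the vanishing of the boundary terms are routine; the one substantive ingredient is Mertens' theorem for primes in arithmetic progressions, and the main obstacle to a naive approach is exactly that the two halves of the integrand cannot be integrated separately — one must keep the difference intact and exploit the cancellation $\log\log X-\log\log(X^{k+1})=-\log(k+1)$.
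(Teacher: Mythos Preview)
Your proof is correct and follows essentially the same route as the paper's: both reduce the truncated integral to $\frac{1}{k+1}\bigl(\sum_{p\le X}1/p-\sum_{q\le X^{k+1}}1/q\bigr)$ plus boundary terms that are $o(1)$, and then invoke Mertens' theorem for arithmetic progressions (the paper's Lemma~\ref{lem:error-pi-mn}) to extract $-\log(k+1)/\varphi(m)$. The only cosmetic difference is that the paper reaches this expression via Riemann--Stieltjes integration by parts whereas you interchange sum and integral directly; you also add an explicit justification of convergence of the improper integral via Theorem~\ref{1:thm}, which the paper leaves implicit.
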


\begin{rek}\label{remark_1}
    In 1853, Chebyshev \cite{C} noticed that there are more primes of the form $p\equiv 3 \pmod 4$ than $p\equiv 1 \pmod 4$. Over the years the synonymous expression “prime number races” has emerged to describe problems originated from Chebyshev. Rubinstein and Sarnak \cite{RS} provided a framework for the quantification of Chebyshev’s bias in prime number races in arithmetic progressions. They considered the sets $P(m;n_1,n_2) = \{x \geq 2: \pi(x; m, n_1) > \pi(x; m, n_2)\}$. With an appropriate notion of size, under standard assumptions, they proved that $P(4;3,1)\ >\ P(4;1,3)$.\\
    
    \noindent In Appendix \ref{error-cal}, we provide calculations of various examples for different values of $k$, $m$, and $n$. The integral computation in Theorem \ref{thm:error-pi-mn} confirms the observations from our data that 
    \[\pi_k(x;m,n)-\pi(x^{k+1};m,n)\] 
    tends to be negative when $k > 0$, and tends to be positive when $-1 < k < 0$ for large values of $x$. Note that this does not definitively specify the sign of the error for given large values of $x$. Since our result integrates over all reals greater than one, we can only conclude that the ``net'' sign will either be negative or positive for specific $k$. A more refined approach to evaluating $\pi_k(x;m,n)-\pi(x^{k+1};m,n)$ when $x$ is sufficiently large necessitates the utilization of the Riemann $\zeta$-function and the Riemann Hypothesis to analyze the bias between $\pi(x;m,n)$ and $\li(x)$, which allows for a more precise determination of the error's behavior in such cases.
    
\end{rek}

\section{Proofs of Theorems}
\noindent One useful technique used to prove Theorem \ref{1:thm} and Theorem \ref{thm:error-pi-mn} is Riemann-Stieltjes Integration. The detailed procedure is included in Appendix \ref{append-int}.\\

\subsection{Proof of Theorem \ref{1:thm}}
We divide the proof in three parts. 
\subsubsection{Start of Proof}
\begin{proof}
By the prime number theorem for arithmetic progressions \cite[Theorem 7-25]{WJL},
\begin{equation}\label{eqn:3}
    \pi(x;m,n)\quad =\quad \frac{1}{\varphi (m)} \int_{2}^{x} \frac{\, dt}{\log t}\ +\ O \left(x \exp\left(-\frac{1}{2}\alpha \sqrt{\log x}\right)\right) \ (x\ge 2)
\end{equation}
where $\alpha$ is a positive constant. In the rest of the proof, denote 
\begin{equation}
    \label{E-error}
    E(x)\ =\ O \left(x \exp\left(-\frac{1}{2}\alpha \sqrt{\log x}\right)\right)
\end{equation}
as the error term in Equation (\ref{eqn:3}).\\

\noindent By applying Equation (\ref{riea-stieljes}) on Riemann-Stieltjes integration, for $x\geq2$ we have 
\begin{equation}\label{arh:prog}
    \pi_k(x;m,n)\quad =\sum_{\substack{p \le x\\ p\  \equiv\  n \Mod{m}}}p^k\quad =\quad \int_{2}^{x} t^k\,  d\pi(t;m,n).
\end{equation}
Taking differentials yields
\begin{align*}
    d\pi(x;m,n)\ &=\ d\left(\frac{1}{\varphi (m)} \int_{2}^{x} \frac{\, dt}{\log t}\right) + dE(x)\\
    &=\ \frac{1}{\varphi (m)} \frac{\, dt}{\log t} + dE(x).
\end{align*}
 Substituting $d\pi(x;m,n)$ into (\ref{arh:prog}), we obtain
\begin{align*}
    \pi_k(x;m,n)\ &=\  \int_{2}^{x} t^k \left(\frac{1}{\varphi (m)} \frac{\, dt}{\log t} + dE(t)\right)\\
    &=\ \frac{1}{\varphi(m)} \int_{2}^{x} \frac{t^k}{\log t} \, dt+ \int_{2}^{x} t^k\,  dE(t).
\end{align*}
The change of variable $t\mapsto t^{1/(k+1)}$ yields
\begin{align*}
    \frac{1}{\varphi(m)}\int_{2}^{x} \frac{t^k}{\log t} \, dt\ &=\ \frac{1}{\varphi(m)}\int_{2^{k+1}}^{x^{k+1}} \frac{t^{k/(k+1)}}{\log \left(t^{1/(k+1)}\right)}\frac{t^{-k/(k+1)}}{k+1}\, dt\\
    &=\ \frac{1}{\varphi(m)}\int_{2^{k+1}}^{x^{k+1}} \frac{\, dt}{\log t}\\ 
    &=\ \pi(x^{k+1};m,n)- E(x^{k+1})-C
\end{align*}
where $C=\li(2^{k+1})/\varphi(m)$.\\

\noindent Thus, 
\begin{equation*}
    \frac{1}{\varphi(m)} \int_{2}^{x} \frac{t^k}{\log t} \, dt\ =\ \pi(x^{k+1};m,n)- E(x^{k+1})-C.
\end{equation*}\\

\noindent Also, integration by parts yields
\begin{align*}
    \int_{2}^{x} t^k \, dE(t)\ &=\ t^k E(t)\Big|_{2}^{x} -k\int_{2}^{x} t^{k-1} E(t) \, dt\\
    &=\ x^k E(x)-k\int_{2}^{x} t^{k-1} E(t) \, dt.
\end{align*}
Finally, we obtain
\begin{align}
    \pi_k(x;m,n)\ =\ &\pi(x^{k+1};m,n)-E(x^{k+1})-C+x^k E(x)-k\int_{2}^{x} t^{k-1} E(t) \, dt.\label{anathrm}
\end{align}

\subsubsection{Error Term}
We now simplify (\ref{anathrm}).\\

\noindent First, according to (\ref{E-error}), 
\begin{equation*}
    E(x)\ =\ O \left(x \exp\left(-\frac{1}{2}\alpha \sqrt{\log x}\right)\right).
\end{equation*}
\noindent Thus,
\begin{equation*}
    E(x^{k+1})\ =\ O\left(x^{k+1} \exp\left(-\frac{1}{2}\alpha \sqrt{(k+1)\log x}\right)\right).
\end{equation*}\\
Also, we have
\begin{align*}
    x^kE(x)\ &=\ x^k\ O\left(x \exp\left(-\frac{1}{2}\alpha \sqrt{\log x}\right)\right)\\ 
    &=\ O\left(x^{k+1} \exp\left(-\frac{1}{2}\alpha \sqrt{\log x}\right)\right).
\end{align*}
The next step is to bound
\begin{equation}
\label{int-error}
    k \int_{2}^{x} t^{k-1} E(t)\, dt
\end{equation}
by using \cite[Lemma 1]{GW}. By applying (\ref{E-error}), we can bound (\ref{int-error}) as
\begin{align}
    \nonumber
    k \int_{2}^{x} t^{k-1} E(t)\, dt\ &\le \ D  \int_{2}^{x} t^{k-1}\left( t \exp\left(-\frac{1}{2}\alpha \sqrt{\log t}\right)\right)\, dt\\
    \label{int-error-2}
    &=\ D  \int_{2}^{x} t^k \exp\left(-\frac{1}{2}\alpha \sqrt{\log t}\right)\, dt
\end{align}
where $D$ is a positive constant.\\

\noindent The integrand of (\ref{int-error-2}) is of the form\quad $\ds t^{k} \exp(-f(t))$\quad where\quad $\ds f(t)=\frac{1}{2}\alpha \sqrt{\log t}$.\\ 
To apply \cite[Lemma 1]{GW}, we verify that
\begin{enumerate}[label=\roman*.]
    \item $\ds \lim_{t\to\infty} [t^{k+1} \exp(-f(t))]\ =\ \infty$
    \item and\qquad $\ds \lim_{t\to\infty} [t f'(t)]\ =\ 0$
\end{enumerate}
for $k>-1$ in separate computations included in Appendix \ref{append-lim}. 
Thus, applying \cite[Lemma 1]{GW}, we obtain 
\begin{equation*}
    k \int_{2}^{x} t^{k-1} E(t)\, dt\ = \ O\left(x^{k+1} \exp\left(-\frac{1}{2}\alpha \sqrt{\log x}\right)\right).
\end{equation*}
Finally, we reduce
\begin{equation*}
    O\left(x^{k+1}\exp\left(-\frac{1}{2}\alpha \sqrt{\log x}\right)\right)-C \ = \ O\left(x^{k+1} \exp\left(-\frac{1}{2}\alpha \sqrt{\log x}\right)\right).
\end{equation*}
\subsubsection{Conclusion}
    Based on previous sections, we simplify (\ref{anathrm}) as
    \begin{align*}
        \pi_k&(x;m,n)-\pi(x^{k+1};m,n) \\
        &=
        \begin{cases}
            O\left(x^{k+1} \exp\left(-\frac{1}{2}\alpha \sqrt{\log x}\right)\right) & \text { if } k>0,\\ 
            O\left(x^{k+1} \exp\left(-\frac{1}{2}\alpha \sqrt{(k+1)\log x}\right)\right) & \text { if } -1<k<0.
        \end{cases}
    \end{align*}
Therefore, we conclude that
\begin{equation*}
     \pi_k(x;m,n)\ \sim\ \pi(x^{k+1};m,n).
\end{equation*}
\end{proof}

\subsection{Proof of Theorem \ref{thm:error-pi-mn}}
Before proving Theorem \ref{thm:error-pi-mn}, we require additional results.
\subsubsection{Abel's Summation Formula and an Additional Lemma}
\emph{Abel's summation formula} is an important tool in analytic number theory derived from integrating Riemann-Stieltjes by parts. The statement of the formula and its hypotheses are given below.\\

\noindent First, recall that a function whose domain is the positive integers is called an \textit{arithmetic function}.

\begin{theorem}[Abel's Summation Formula]\label{thm:abel-id}
     Let $a$ be an arithmetic function. Define
     \begin{equation*}
         A(t)\ :=\ \sum\limits_{\substack{1\le n\le t}} a(n)
     \end{equation*}
     for $t\in \mathbb{R}$. Fix $x,y\in \mathbb{R}$ such that $x<y$ and let $f$ be a continuously differentiable function on $[x,y]$. Then
     \begin{equation}
         \sum\limits_{\substack{x< n\le y}} a(n) f(n)\ =\ A(y)f(y)-A(x)f(x)-\int_{x}^{y} A(t) f'(t)\, dt.
     \end{equation}
\end{theorem}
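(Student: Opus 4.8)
The plan is to recognize the left-hand side as a Riemann--Stieltjes integral against the step function $A$ and then integrate by parts, which is exactly the mechanism behind the Riemann--Stieltjes computations of Appendix~\ref{append-int}. Since $A$ is constant between consecutive integers and jumps by $a(n)$ at each integer $n$, while $f$ is continuous, the Riemann--Stieltjes sums telescope to $\sum_{x<n\le y}a(n)f(n)=\int_x^y f(t)\,dA(t)$; here $A$ is right-continuous, so a jump at $n=x$ is not counted while a jump at $n=y$ is, which matches the half-open range $x<n\le y$ in the statement.

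Next I would apply Riemann--Stieltjes integration by parts: whenever one of the two functions is continuous at each point where the other is discontinuous, $\int_x^y f\,dA=f(y)A(y)-f(x)A(x)-\int_x^y A\,df$. Here $f$ is continuously differentiable, hence continuous everywhere, and $A$ is a step function, so the two have no common discontinuity and the hypothesis holds. Because $f\in C^1$, the Stieltjes integral $\int_x^y A\,df$ equals the ordinary Riemann integral $\int_x^y A(t)f'(t)\,dt$, and substituting this into the previous display gives the claimed identity. A fully self-contained alternative, which I might write out instead, is to argue discretely: with $k=\lfloor x\rfloor$ and $\ell=\lfloor y\rfloor$, write $a(n)=A(n)-A(n-1)$, perform a discrete summation by parts (reindex one sum, telescope, and collect the boundary terms), then replace each difference $f(n)-f(n+1)$ by $-\int_n^{n+1}f'(t)\,dt$, using that $A$ is constant on $[n,n+1)$ to pull $A(n)$ inside the integral; absorbing the two fractional end pieces $\int_x^{k+1}$ and $\int_\ell^y$ (on which $A$ equals $A(x)$ and $A(y)$ respectively) then turns the boundary terms into $A(y)f(y)-A(x)f(x)$.

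There is no deep obstacle here; the only real care is the bookkeeping at the endpoints --- making sure the jump of $A$ at an integer $x$ or $y$ is attributed to the correct side so that the summation range $x<n\le y$ comes out exactly --- and, in the Stieltjes version, verifying the no-common-discontinuity condition that licenses the integration by parts. Everything else reduces to reindexing sums and the fundamental theorem of calculus $g(b)-g(a)=\int_a^b g'(t)\,dt$.
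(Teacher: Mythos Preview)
Your proposal is correct and matches exactly the mechanism the paper invokes: the paper does not actually supply a proof of Abel's Summation Formula but only states it, remarking that it is ``derived from integrating Riemann--Stieltjes by parts,'' which is precisely your first argument. Your alternative discrete summation-by-parts argument is also valid and would serve equally well.
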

\noindent The following lemma can be found in $\textit{Davenport's}$ Multiplicative Number Theory textbook \cite{D}. We provide a proof for that.
\begin{lemma}\label{lem:error-pi-mn}
    Fix positive integers $m,n\in\bbz_{>0}$ such that $\gcd(m,n)=1$. Then for $x\ge 2$,
    \[ \sum\limits_{\substack{p\le x\\ p\equiv n\pmod{m}}} \frac{1}{p}\ =\ \frac{\log\log (x)}{\varphi(m)}+B+O\left(\frac{1}{\log x}\right),\]
    where $B$ is a constant.
\end{lemma}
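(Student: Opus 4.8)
The plan is to obtain the Mertens-type estimate for primes in an arithmetic progression by Abel summation, starting from the prime number theorem for arithmetic progressions in the form already available to us, namely Equation (\ref{eqn:3}): $\pi(t;m,n) = \li(t)/\varphi(m) + E(t)$ with $E(t) = O\!\left(t\exp(-\tfrac12\alpha\sqrt{\log t})\right)$. First I would apply Theorem \ref{thm:abel-id} (Abel's summation formula) with $a(\ell)$ the indicator that $\ell$ is a prime $\equiv n \pmod m$, so that $A(t) = \pi(t;m,n)$, and with $f(t) = 1/t$, over the interval $(2^-, x]$ (or any fixed lower endpoint past which there is a prime in the progression; handle the finitely many smaller primes as an additive constant absorbed into $B$). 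This gives
\[
\sum_{\substack{p\le x\\ p\equiv n\,(m)}}\frac1p \ =\ \frac{\pi(x;m,n)}{x} + \int_{2}^{x}\frac{\pi(t;m,n)}{t^2}\,dt \ +\ O(1).
\]
The boundary term $\pi(x;m,n)/x = O(1/\log x)$ by (\ref{dric:eqn}), which is already within the claimed error.

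Next I would substitute $\pi(t;m,n) = \li(t)/\varphi(m) + E(t)$ into the integral and split it into a main term and an error term. For the error, $\int_2^x E(t)/t^2\,dt \ll \int_2^x t^{-1}\exp(-\tfrac12\alpha\sqrt{\log t})\,dt$; since $\exp(-\tfrac12\alpha\sqrt{\log t})$ decays faster than any negative power of $\log t$, this integral converges as $x\to\infty$, so it equals a constant plus a tail that is $O(1/\log x)$ — in fact $o(1/\log x)$, hence certainly $O(1/\log x)$. (One can see the tail bound by the substitution $u=\log t$: $\int_{\log x}^\infty e^{-\frac12\alpha\sqrt u}\,du \ll e^{-\frac14\alpha\sqrt{\log x}}$, which is far smaller than $1/\log x$.) That constant gets absorbed into $B$.

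The remaining task is the main term $\frac{1}{\varphi(m)}\int_2^x \frac{\li(t)}{t^2}\,dt$. Here I would integrate by parts (or recall the classical computation): $\int \li(t)\,d(-1/t) = -\li(t)/t + \int \frac{1}{t\log t}\,dt$, and $\int_2^x \frac{dt}{t\log t} = \log\log x - \log\log 2$, while $\li(t)/t \to 0$ (it is $O(1/\log t)$). Thus $\int_2^x \frac{\li(t)}{t^2}\,dt = \log\log x + c_0 + O(1/\log x)$ for a constant $c_0$. Dividing by $\varphi(m)$ yields the main term $\frac{\log\log x}{\varphi(m)}$, with all constants collected into $B$ and all error contributions bounded by $O(1/\log x)$. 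I expect the only mild subtlety to be bookkeeping: making sure the lower-endpoint contributions, the convergent pieces of each integral, and the $-\log\log 2$ type constants are consistently swept into the single constant $B$, and confirming that every discarded tail is genuinely $O(1/\log x)$ rather than merely $o(1)$; the decay rate of $E$ makes this routine, so there is no serious obstacle.
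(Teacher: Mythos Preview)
Your argument is correct, but it takes a genuinely different route from the paper's. The paper applies Abel's summation formula with the weight $a(p)=\tfrac{\log p}{p}$ and $f(t)=1/\log t$, so that the partial-sum input is the Mertens-type estimate
\[
\sum_{\substack{p\le x\\ p\equiv n\,(m)}}\frac{\log p}{p}\ =\ \frac{\log x}{\varphi(m)}+O(1),
\]
quoted from Apostol; it then computes $\int_2^x \frac{dt}{t\log t}$ and handles the remainder $\int R(t)/(t\log^2 t)\,dt$ as a convergent integral plus a tail of size $O(1/\log x)$. You instead use $a(\ell)=\mathbf 1_{\ell\text{ prime},\ \ell\equiv n\,(m)}$ and $f(t)=1/t$, so your partial sum is $\pi(t;m,n)$ itself, and you feed in Equation (\ref{eqn:3}) directly. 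The paper's version has the virtue of relying only on an elementary, pre-PNT input (the $O(1)$ estimate above needs no zero-free region), whereas your version uses a much stronger hypothesis than is strictly necessary. On the other hand, since (\ref{eqn:3}) is already on the table in this paper, your approach is arguably more self-contained here: it avoids importing an extra outside result, and the surplus strength in $E(t)$ makes the tail estimates trivial. Either way the bookkeeping you flag---collecting the lower-endpoint terms, the convergent pieces, and the $\li(x)/x$ boundary term into $B$---is routine and goes through exactly as you describe.
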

\begin{proof}
Let 
\begin{equation*}
    f(p)\ =\ \frac{1}{\log p}.
\end{equation*}
 Note that $f$ is defined on primes and is continuous on  $[2,x]$. Let
\begin{equation*}
a(p)\ =\
        \begin{cases}
            \frac{\log p}{p} & \text{ if}\quad p\ \equiv\  n\pmod{m}\quad \text{is prime},\\ 
            0 & \text{otherwise.}
        \end{cases}
\end{equation*}
If $x\ge 2$, 
\begin{equation*}
    f'(x)\ =\ -\frac{1}{x\log^2 x}.
\end{equation*}
Define 
\begin{equation*}
    A(x)\ :=\ \sum\limits_{\substack{p\le x\\ p\equiv n\pmod{m}}} a(p)\ =\ \sum\limits_{\substack{p\le x\\ p\equiv n\pmod{m}}} \frac{\log p}{p}.
\end{equation*}
According to \cite[p.148, Theorem 7.3]{Ap}
\begin{equation*}
    \sum\limits_{\substack{p\le x\\ p\equiv n\pmod{m}}} \frac{\log p}{p}\ =\ \frac{\log x}{\varphi(m)}+ R(x),\ \text{where}\ R(x)\ =\ O(1).
\end{equation*}
Applying Abel's Summation Formula (Theorem \ref{thm:abel-id}), we have
\begin{align}
\nonumber
    \sum\limits_{\substack{p\le x\\ 
    p\equiv n\pmod{m}}}\frac{1}{p}\ 
    &=\ \sum\limits_{\substack{1<p\le x\\ p\equiv n\pmod{m}}} a(p) f(p)\\
    \nonumber
    &=\ \frac{1}{\log x}\left(\frac{\log x}{\varphi(m)}+ O(1) \right)+\int_{2}^{x} \frac{1}{t\log^2 t}\left(\frac{\log t}{\varphi(m)}+ R(t)\right) \,dt\\
    &=\ \frac{1}{\varphi(m)}
    +O\left(\frac{1}{\log x}\right)+\frac{1}{\varphi(m)}\int_{2}^{x} \frac{dt}{t \log t} + \int_{2}^{x} \frac{R(t)}{t \log^2 t} \,dt.\label{contin}
\end{align}
Rewrite
\begin{equation*}
    \int_{2}^{x} \frac{R(t)}{t \log^2 t} dt\ =\ \int_{2}^{\infty} \frac{R(t)}{t \log^2 t} \,dt -\int_{x}^{\infty} \frac{R(t)}{t \log^2 t} \,dt.
\end{equation*}
Notice that $\int_{2}^{\infty} \frac{R(t)}{t \log^2 t} \,dt$ exists because $R(t)=O(1).$\\

\noindent Further, since $R(t)=O(1)$, we have
\begin{equation*}
    \int_{x}^{\infty} \frac{R(t)}{t \log^2 t} \, dt\ =\ O\left(\int_{x}^{\infty} \frac{1}{t \log^2 t} \,dt\right) \ =\ O\left(\frac{1}{\log x}\right).
\end{equation*}
Continuing from (\ref{contin}):
\begin{align*}
    \sum\limits_{\substack{p\le x\\ 
    p\equiv n\pmod{m}}}\frac{1}{p}\
    &=\ \frac{1}{\varphi(m)}+O\left(\frac{1}{\log x}\right)+\frac{\log \log x}{\varphi(m)}-\frac{\log \log 2}{\varphi(m)}\\
    &+\int_{2}^{\infty} \frac{R(t)}{t \log^2 t} \,dt -\int_{x}^{\infty} \frac{R(t)}{t \log^2 t} \,dt\\
    &=\ \frac{\log \log x}{\varphi(m)}+B+ O\left(\frac{1}{\log x}\right),
\end{align*}
where
\begin{equation*}
    B\ =\ \frac{1}{\varphi(m)}-\frac{\log \log 2}{\varphi(m)}+\int_{2}^{\infty} \frac{R(t)}{t \log^2 t} \,dt.
\end{equation*}
Therefore, we conclude that
\begin{equation*}
 \sum\limits_{\substack{p\le x\\ p\equiv n\pmod{m}}}\frac{1}{p}\    =\ \frac{\log \log x}{\varphi(m)}+B+ O\left(\frac{1}{\log x}\right).
\end{equation*}

\end{proof}
\subsubsection{Proof of Theorem \ref{thm:error-pi-mn} by Lemma \ref{lem:error-pi-mn}}
Recall the statement of Theorem \ref{thm:error-pi-mn}.
\begin{thm}\textbf{\ref{thm:error-pi-mn}}
    Fix a real number $k>-1$ and positive integers $m,n\in\bbz_{>0}$ such that $\gcd(m,n)=1$. Then
    \begin{equation*}
        \int_1^\infty \frac{\pi_k(t;m,n)-\pi(t^{k+1};m,n)}{t^{k+2}}\, dt\ =\ -\frac{\log(k+1)}{(k+1) \varphi(m)}.
    \end{equation*}

    \noindent Further,
    \begin{align*}
    -\frac{\log(k+1)}{(k+1) \varphi(m)}\ &<\ 0\qquad (k>0)\\
    -\frac{\log(k+1)}{(k+1) \varphi(m)}\ &>\ 0\qquad (-1<k<0).\\
    \end{align*}
    \end{thm}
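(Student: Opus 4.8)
The plan is to compute the integral by interchanging the order of integration (Fubini/Tonelli) after expressing each of the two counting functions as a sum over primes, then recognizing the resulting prime sums via Lemma~\ref{lem:error-pi-mn}. First I would split the integral into the two pieces
\[
I_1\ =\ \int_1^\infty \frac{\pi_k(t;m,n)}{t^{k+2}}\,dt,\qquad
I_2\ =\ \int_1^\infty \frac{\pi(t^{k+1};m,n)}{t^{k+2}}\,dt,
\]
being careful that each piece individually may diverge, so in practice I would work with a truncation at $T$ and let $T\to\infty$ at the end, or combine the integrands before integrating. For $I_1$, writing $\pi_k(t;m,n)=\sum_{p\le t,\ p\equiv n} p^k$ and swapping sum and integral gives $\sum_{p\equiv n} p^k \int_p^\infty t^{-k-2}\,dt = \sum_{p\equiv n} p^k \cdot \frac{p^{-k-1}}{k+1} = \frac{1}{k+1}\sum_{p\equiv n}\frac1p$. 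For $I_2$, the substitution $u=t^{k+1}$ (so $t = u^{1/(k+1)}$, $dt = \frac{1}{k+1}u^{1/(k+1)-1}du$) converts it to $\frac{1}{k+1}\int_1^\infty \frac{\pi(u;m,n)}{u^{2}}\,du$, and the same swap yields $\frac{1}{k+1}\sum_{p\equiv n}\frac1p$ as well. Both prime sums diverge, so the $T\to\infty$ bookkeeping is essential: the correct statement is that the combined integral up to $T$ equals $\frac{1}{k+1}\big(\sum_{p\le T,\ p\equiv n}\frac1p - \sum_{p\le T^{k+1},\ p\equiv n}\frac1p\big)$ plus boundary terms that vanish, and then Lemma~\ref{lem:error-pi-mn} evaluates the difference of partial sums.

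The key computation is then
\[
\sum_{\substack{p\le T\\ p\equiv n}}\frac1p - \sum_{\substack{p\le T^{k+1}\\ p\equiv n}}\frac1p
\ =\ \frac{\log\log T - \log\log(T^{k+1})}{\varphi(m)} + O\!\left(\frac{1}{\log T}\right)
\ =\ \frac{-\log(k+1)}{\varphi(m)} + O\!\left(\frac{1}{\log T}\right),
\]
since $\log\log(T^{k+1}) = \log\big((k+1)\log T\big) = \log(k+1) + \log\log T$ and the constants $B$ cancel. Multiplying by $\frac{1}{k+1}$ and letting $T\to\infty$ gives the claimed value $-\frac{\log(k+1)}{(k+1)\varphi(m)}$. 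The sign statement is then immediate: for $k>0$ we have $k+1>1$ so $\log(k+1)>0$, making the expression negative; for $-1<k<0$ we have $0<k+1<1$ so $\log(k+1)<0$ while $(k+1)\varphi(m)>0$, making the expression positive.

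The main obstacle I expect is making the interchange of summation and integration and the truncation argument rigorous, since $I_1$ and $I_2$ are each individually divergent integrals — one cannot naively split. I would handle this either by keeping everything under a single integral $\int_1^T$ with integrand $\frac{\pi_k(t;m,n)-\pi(t^{k+1};m,n)}{t^{k+2}}$ throughout (so that the combined integrand is nonnegative for, say, $k>0$ by Theorem~\ref{1:thm}'s heuristic, though one should not rely on pointwise sign and instead just track the exact finite-$T$ identity), or by introducing a regularizing factor $t^{-s}$ with $s>0$, computing, and then taking $s\to 0^+$. The boundary terms at $t=1$ need a small check too: near $t=1$ there are no primes $p\le t$ once $t<2$, so $\pi_k(t;m,n)=\pi(t^{k+1};m,n)=0$ for $t<2^{1/(k+1)}$ (assuming $n\not\equiv$ small primes, or more carefully handling the smallest prime $\equiv n$), which makes the lower limit harmless; I would state this cleanly rather than belabor it. Everything else is the routine substitution and the direct application of Lemma~\ref{lem:error-pi-mn} and Abel summation already developed above.
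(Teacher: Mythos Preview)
Your proposal is correct and follows essentially the same route as the paper: truncate at a finite upper limit, reduce each piece (via the substitution $u=t^{k+1}$ for the $\pi(t^{k+1};m,n)$ term) to $\frac{1}{k+1}$ times a partial sum $\sum_{p\le X,\ p\equiv n}1/p$ plus a boundary term of the shape $\pi_k(T;m,n)/T^{k+1}$ or $\pi(T^{k+1};m,n)/T^{k+1}$ that is $o(1)$, and then apply Lemma~\ref{lem:error-pi-mn} so that the constants $B$ cancel and the $\log\log$ terms leave exactly $-\log(k+1)/\varphi(m)$. The only cosmetic difference is that the paper phrases the swap as Riemann--Stieltjes integration by parts rather than Fubini, but the resulting identities and the handling of the truncation are identical.
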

    
    \begin{proof}
       By Lemma \ref{lem:error-pi-mn}, if $x\ge 2$, we have
        \begin{align*}
            \sum\limits_{\substack{p\le x\\ p\equiv n\pmod{m}}} \frac{1}{p}&\ =\ \frac{\log\log (x)}{\varphi(m)}+B+O\left(\frac{1}{\log x}\right) \label{recip.}\\
            &=\ \frac{1}{\varphi(m)} \log\log x+B+o(1),
        \end{align*}
         where $B$ is a constant.\\

        \noindent Let $u=t^{k+1}$. Then
        \begin{align*}
            \int_1^x \frac{\pi(t^{k+1};m,n)}{t^{k+2}}\,dt&\ =\ \frac{1}{k+1}\int_1^{x^{k+1}} \frac{\pi(u;m,n)}{u^2}\,du\\
            &=\ \frac{-1}{k+1}\frac{\pi(x^{k+1};m,n)}{x^{k+1}}+\frac{1}{k+1}\int_1^{x^{k+1}} \frac{1}{u}\, d\pi(u;m,n)\\
            &=\ \frac{-1}{k+1}\frac{\pi(x^{k+1};m,n)}{x^{k+1}}+\frac{1}{k+1}\sum\limits_{\substack{p\le x^{k+1}\\ p\equiv n\pmod{m}}} \frac{1}{p}\\
            &=\ \frac{\log\log (x^{k+1})}{(k+1)\varphi(m)}+\frac{B}{k+1}+o(1).
        \end{align*}
        Similarly,
        \begin{align*}
            \int_1^x \frac{\pi_k(t,m,n)}{t^{k+2}}\,dt&\ =\ \frac{-1}{k+1}\frac{\pi_k(x,m,n)}{x^{k+1}}+\frac{1}{k+1}\int_1^x \frac{1}{t^{k+1}}\,d\pi_k(t,m,n)\\
            &=\ \frac{-1}{k+1}\frac{\pi_k(x,m,n)}{x^{k+1}}+\frac{1}{k+1}\sum\limits_{\substack{p\le x\\ p\equiv n\pmod{m}}} \frac{1}{p^{k+1}}p^k\\
            &=\ \frac{-1}{k+1}\frac{\pi_k(x,m,n)}{x^{k+1}}+\frac{1}{k+1}\sum\limits_{\substack{p\le x\\ p\equiv n\pmod{m}}} \frac{1}{p}\\
            &=\ \frac{\log\log (x^{k+1})}{(k+1)\varphi(m)}+\frac{B}{k+1}+o(1).
        \end{align*}
        Thus, 
        \[\int_1^x \frac{\pi_k(t,m,n)-\pi(t^{k+1},m,n)}{t^{k+2}}\,dt\ =\ -\frac{\log(k+1)}{(k+1)\varphi(m)}+o(1),\] 
        which implies the theorem.
    \end{proof}

\section{Appendices}
\subsection{Appendix: Some Examples of Main Result}\label{error-cal}
In this section, we present tables containing illustrative examples that showcase the accuracy of our main results. The main goal is to view how closely $\pi_k(x^{1/(k+1)};m,n)$ approximates $\pi(x;m,n)$.\\

\noindent Utilizing High Performance Computing\footnote[1]{Appendix \ref{error-cal} is based upon work supported by the Great Lakes Slurm cluster at the University of Michigan, Ann Arbor, MI, where the author (Zhangze Li) is currently enrolled as an undergraduate student, and the National Science Foundation under Grant No. DMS-1929284 while the author (Stephanie Reyes) was in residence at the Institute for Computational and Experimental Research in Mathematics in Providence, RI, during the Summer@ICERM program.}, we examine the approximation for primes of the form $p\equiv 1 \pmod 4$, $p\equiv 3 \pmod 4$, $p\equiv 1 \pmod 5$, and $p\equiv 3 \pmod 5$. With above modular cases, we select $9$ positive integers between $10^4$ and $10^8$, and calculate
\[\text{Error}\ (x,k;m,n)\ := \ \frac{\pi(x;m,n)-\pi_k(x^{1/(k+1)};m,n)}{\pi(x;m,n)}\]
for $k=1,\ 1/2,\ -1/10,\ -1/12$.

\subsubsection{Error for $k=1$}
We observe that of the 36 Errors measured in Tables 1-4, $72.22\%$ are positive.

\begin{table}[H]
\begin{center}
\begin{tabular}{||c| c| c| c||} 
 \hline
 $x$ & $\pi(x;4,1)$ & $\pi_1(x^{1/2};4,1)$ &  Error $\%$\\ [0.5ex]
 \hline\hline
 $1\times 10^4$ & 609 & 515 & $15.43514\%$ \\ 
 \hline
 $5\times 10^4$ & 2549 & 2025 & $20.55708 \%$ \\ 
 \hline
 $1\times 10^5$ & 4783& 4418 & $7.63119\%$ \\
 \hline
 $5\times 10^5$ & 20731& 19668 & $5.12759 \%$ \\
 \hline
 $1\times 10^6$ & 39175 & 36628 & $6.50160 \%$ \\
 \hline
  $5\times 10^6$ & 174193 & 165373 & $5.06335\%$ \\
 \hline
 $1\times 10^7$ &332180& 323048 & $2.74911\%$ \\ 
 \hline
 $5\times 10^7$ &1500452& 1475230 & $1.68096\%$ \\ 
 \hline
 $1\times 10^8$ & 2880504 & 2863281 & $0.59792\%$ \\ [1ex]
 \hline
\end{tabular}
\caption{$\pi(x;4,1)$ and $\pi_{1}(x^{1/2};4,1)$.}
\end{center}
\end{table}

\begin{table}[H]
\begin{center}
\begin{tabular}{||c| c| c| c||} 
 \hline
 $x$ & $\pi(x;4,3)$ & $\pi_1(x^{1/2};4,3)$ &  Error $\%$\\ [0.5ex]
 \hline\hline
 $1\times 10^4$ & 619 & 543 & { }$12.27787\%$ \\ 
 \hline
 $5\times 10^4$ & 2583 & 2411 &{ } $6.65892\%$ \\ 
 \hline
 $1\times 10^5$ & 4808& 4786 &{ } $0.45757\%$ \\
 \hline
 $5\times 10^5$ & 20806& 20643 & { } $0.78343\%$ \\
 \hline
 $1\times 10^6$ & 39322 & 39497 & $-0.44504\%$ \\
 \hline
  $5\times 10^6$ & 174319 & 170667 &{ } $2.09501\%$ \\
 \hline
 $1\times 10^7$ &332398& 319819 &{ } $3.78432\%$ \\ 
 \hline
 $5\times 10^7$ &1500681& 1516507 & $-1.05459\%$ \\ 
 \hline
 $1\times 10^8$ & 2880950 & 2873113 &{ } $0.27203\%$ \\ [1ex]
 \hline
\end{tabular}
\caption{$\pi(x;4,3)$ and $\pi_{1}(x^{1/2};4,3)$.}
\end{center}
\end{table}

\begin{table}[H]
\begin{center}
\begin{tabular}{||c| c| c| c||} 
 \hline
 $x$ & $\pi(x;5,1)$ & $\pi_1(x^{1/2};5,1)$ &  Error $\%$\\ [0.5ex]
 \hline\hline
 $1\times 10^4$ & 306 & 215 &{ }$29.73856\%$ \\ 
 \hline
 $5\times 10^4$ & 1274 & 1181 &{ } $7.29984\%$ \\ 
 \hline
 $1\times 10^5$ & 2387& 2536 & $-6.24214\%$ \\
 \hline
 $5\times 10^5$ & 10386& 10631 & $-2.35894\%$ \\
 \hline
 $1\times 10^6$ & 19617 & 18470 &{ } $5.84697 \%$ \\
 \hline
  $5\times 10^6$ & 87062 & 81830 &{ } $6.00951\%$ \\
 \hline
 $1\times 10^7$ &166104& 156148 &{ } $5.99384\%$ \\ 
 \hline
 $5\times 10^7$ &750340& 763457 & $-1.74814\%$ \\ 
 \hline
 $1\times 10^8$ & 1440298 & 1448386 & $-0.56155\%$ \\ [1ex]
 \hline
\end{tabular}
\caption{$\pi(x;5,1)$ and $\pi_{1}(x^{1/2};5,1)$.}
\end{center}
\end{table}

\begin{table}[H]
\begin{center}
\begin{tabular}{||c| c| c| c||} 
 \hline
 $x$ & $\pi(x;5,3)$ & $\pi_1(x^{1/2};5,3)$ &  Error $\%$\\ [0.5ex]
 \hline\hline
 $1\times 10^4$ & 310 & 291 &{ } $6.12903\%$ \\ 
 \hline
 $5\times 10^4$ & 1290 & 1036 &{ }$19.68992\%$ \\ 
 \hline
 $1\times 10^5$ & 2402& 2644 &$-10.07494\%$ { } \\
 \hline
 $5\times 10^5$ & 10382& 10539 & $-1.51223\%$ \\
 \hline
 $1\times 10^6$ & 19665 & 18146 &{ } $7.72438\%$ \\
 \hline
  $5\times 10^6$ & 87216 & 90461 & $-3.72065\%$ \\
 \hline
 $1\times 10^7$ &166230& 156456 &{ } $5.87981\%$ \\ 
 \hline
 $5\times 10^7$ &750395& 753820 & $-0.45643\%$ \\ 
 \hline
 $1\times 10^8$ & 1440474 & 1436510 &{ } $0.27519\%$ \\ [1ex]
 \hline
\end{tabular}
\caption{$\pi(x;5,3)$ and $\pi_{1}(x^{1/2};5,3)$.}
\end{center}
\end{table}

\newpage
\subsubsection{Error for $k=1/2$}
We observe that of the 36 Errors measured in Tables 5-8, $72.22\%$ are positive.

\begin{table}[H]
\begin{center}
\begin{tabular}{||c| c| c| c||} 
 \hline
 $x$ & $\pi(x;4,1)$ & $\pi_{1/2}(x^{2/3};4,1)$ &  Error $\%$\\ [0.5ex]
 \hline\hline
 $1\times 10^4$ & 609 & 617.62512 &$-1.41628\%$ { } \\ 
 \hline
 $5\times 10^4$ & 2549& 2477.64505 & $2.79933\%$ \\
 \hline
 $1\times 10^5$ & 4783& 4659.83812 & $2.57499\%$ \\
 \hline
 $5\times 10^5$ & 20731& 20125.89212 & $2.91886\%$ \\
 \hline
 $1\times 10^6$ & 39175 & 38904.00140 & $0.69176\%$ \\
 \hline
 $5\times 10^6$ & 174193 & 173246.23939 & $0.54351\%$ \\
 \hline
 $1\times 10^7$ &332180& 329252.45078 & $0.88131\%$ \\ 
 \hline
 $5\times 10^7$ &1500452& 1492885.30185 & $0.50429\%$ \\ 
 \hline
 $1\times 10^8$ & 2880504 & 2873027.62482 & $0.25955\%$ \\ [1ex]
 \hline
\end{tabular}
\caption{$\pi(x;4,1)$ and $\pi_{1/2}(x^{2/3};4,1)$.}
\end{center}
\end{table}

\begin{table}[H]
\begin{center}
\begin{tabular}{||c| c| c| c||} 
 \hline
 $x$ & $\pi(x;4,3)$ & $\pi_{1/2}(x^{2/3};4,3)$ &  Error $\%$\\ [0.5ex]
 \hline\hline
 $1\times 10^4$ & 619 & 591.60159 & $4.42624\%$ \\ 
 \hline
 $5\times 10^4$ &2583 & 2502.18366 & $3.12878\%$ \\
 \hline
 $1\times 10^5$ &4808  & 4833.35209 & $-0.52729\%$ { } \\
 \hline
 $5\times 10^5$ & 20806 & 20951.89316 & $-0.70121\%$ { } \\
 \hline
 $1\times 10^6$ & 39322 & 39178.87051 & $0.36399\%$ \\
 \hline
 $5\times 10^6$ & 174319 & 173924.73741 & $0.22617\%$ \\
 \hline
 $1\times 10^7$ & 332398 & 331806.98445 & $0.17780\%$ \\ 
 \hline
 $5\times 10^7$ &1500681 & 1502046.79913 & $-0.09101\%$ { } \\ 
 \hline
 $1\times 10^8$ & 2880950 & 2879155.53993 & $0.06229\%$ \\ [1ex]
 \hline
\end{tabular}
\caption{$\pi(x;4,3)$ and $\pi_{1/2}(x^{2/3};4,3)$.}
\end{center}
\end{table}

\begin{table}[H]
\begin{center}
\begin{tabular}{||c| c| c| c||} 
 \hline
 $x$ & $\pi(x;5,1)$ & $\pi_{1/2}(x^{2/3};5,1)$ &  Error $\%$\\ [0.5ex]
 \hline\hline
 $1\times 10^4$ & 306 & 290.30286 & $5.12978\%$ \\ 
 \hline
 $5\times 10^4$ &1274 & 1243.85408 & $2.36624\%$ \\
 \hline
 $1\times 10^5$ & 2387 & 2288.69057 & $4.11853\%$ \\
 \hline
 $5\times 10^5$ & 10386 & 10309.63049 & $0.73531\%$ \\
 \hline
 $1\times 10^6$ & 19617 & 19616.30635 & $0.00354\%$ \\
 \hline
 $5\times 10^6$ & 87062 & 87036.61969 & $0.02915\%$ \\
 \hline
 $1\times 10^7$ & 166104 & 164310.69864 & $1.07963\%$ \\ 
 \hline
 $5\times 10^7$ & 750340 & 752249.09877 & $-0.25443\%$ { } \\ 
 \hline
 $1\times 10^8$ & 1440298 & 1430300.15946 & $0.69415\%$ \\ [1ex]
 \hline
\end{tabular}
\caption{$\pi(x;5,1)$ and $\pi_{1/2}(x^{2/3};5,1)$.}
\end{center}
\end{table}

\begin{table}[H]
\begin{center}
\begin{tabular}{||c| c| c| c||} 
 \hline
 $x$ & $\pi(x;5,3)$ & $\pi_{1/2}(x^{2/3};5,3)$ &  Error $\%$\\ [0.5ex]
 \hline\hline
 $1\times 10^4$ & 310 & 321.30898 & $-3.64806\%$ \\ 
 \hline
 $5\times 10^4$ &1290 & 1244.44539 &{ } $3.53136\%$ \\
 \hline
 $1\times 10^5$ &2402  &  2501.69252& $-4.15040\%$ \\
 \hline
 $5\times 10^5$ &10382  & 10504.71338 & $-1.18198\%$ \\
 \hline
 $1\times 10^6$ & 19665 &19604.34768  & { } $0.30843 \%$ \\
 \hline
 $5\times 10^6$ & 87216 & 86272.37280 &{ } $1.08194\%$ \\
 \hline
 $1\times 10^7$ & 166230 &167998.78804  & $-1.06406\%$ \\ 
 \hline
 $5\times 10^7$ & 750395& 748916.40906 &{ } $0.19704\%$ \\ 
 \hline
 $1\times 10^8$ & 1440474 & 1444351.68992 & $-0.26920\%$ \\ [1ex]
 \hline
\end{tabular}
\caption{$\pi(x;5,3)$ and $\pi_{1/2}(x^{2/3};5,3)$.}
\end{center}
\end{table}

\subsubsection{Error for $k=-1/10$}
We observe that of the 36 Errors measured in Tables 9-12, $72.22\%$ are negative.

\begin{table}[H]
\begin{center}
\begin{tabular}{||c| c| c| c||} 
 \hline
 $x$ & $\pi(x;4,1)$ & $\pi_{-1/10}(x^{10/9};4,1)$ &  Error $\%$\\ [0.5ex]
 \hline\hline
 $1\times 10^4$ & 609 & 613.50169 & $-0.73919 \%$ \\ 
 \hline
 $5\times 10^4$ & 2549 & 2562.89963 & $-0.54530 \%$ \\ 
 \hline
 $1\times 10^5$ & 4783& 4788.03485 & $-0.10527 \%$ \\
 \hline
 $5\times 10^5$ & 20731& 20771.18437 & $-0.19384 \%$ \\
 \hline
 $1\times 10^6$ & 39175 & 39266.51644 & $-0.23361 \%$ \\
 \hline
  $5\times 10^6$ & 174193 & 174232.64634 & $-0.02276 \%$ \\
 \hline
 $1\times 10^7$ &332180& 332314.25320 & $-0.04042 \%$ \\ 
 \hline
 $5\times 10^7$ &1500452& 1500545.39963 & $-0.00622 \%$ \\ 
 \hline
 $1\times 10^8$ & 2880504 &  2880813.47274 & $-0.01074 \%$ \\ [1ex]
 \hline
\end{tabular}
\caption{$\pi(x;4,1)$ and $\pi_{-1/10}(x^{10/9};4,1)$.}
\end{center}
\end{table}

\begin{table}[H]
\begin{center}
\begin{tabular}{||c| c| c| c||} 
 \hline
 $x$ & $\pi(x;4,3)$ & $\pi_{-1/10}(x^{10/9};4,3)$ &  Error $\%$\\ [0.5ex]
 \hline\hline
 $1\times 10^4$ & 619 & 618.68563 &{ } $0.05079 \%$ \\ 
 \hline
 $5\times 10^4$ & 2583 & 2579.32406 &{ } $0.14231 \%$ \\ 
 \hline
 $1\times 10^5$ & 4808& 4821.56790 & $-0.28219 \%$ \\
 \hline
 $5\times 10^5$ & 20806& 20796.90807 &{ } $0.04370 \%$ \\
 \hline
 $1\times 10^6$ & 39322 & 39305.82276 &{ } $0.04114 \%$ \\
 \hline
  $5\times 10^6$ & 174319 & 174323.13838 & $-0.00237 \%$ \\
 \hline
 $1\times 10^7$ &332398& 332477.68215 & $-0.02397 \%$ \\ 
 \hline
 $5\times 10^7$ &1500681& 1500779.09018 & $-0.00654 \%$ \\ 
 \hline
 $1\times 10^8$ & 2880950 & 2881063.76609 & $-0.00395 \%$ \\ [1ex]
 \hline
\end{tabular}
\caption{$\pi(x;4,3)$ and $\pi_{-1/10}(x^{10/9};4,3)$.}
\end{center}
\end{table}

\begin{table}[H]
\begin{center}
\begin{tabular}{||c| c| c| c||} 
 \hline
 $x$ & $\pi(x;5,1)$ & $\pi_{-1/10}(x^{10/9};5,1)$ &  Error $\%$\\ [0.5ex]
 \hline\hline
 $1\times 10^4$ & 306 & 306.84917 & $-0.27750 \%$ \\ 
 \hline
 $5\times 10^4$ & 1274 & 1284.72538 & $-0.84187 \%$ \\ 
 \hline
 $1\times 10^5$ & 2387& 2397.35493 & $-0.43380 \%$ \\
 \hline
 $5\times 10^5$ & 10386 & 10381.05165 & $-0.04764 \%$ \\
 \hline
 $1\times 10^6$ & 19617 & 19624.30360 & $-0.03723 \%$ \\
 \hline
  $5\times 10^6$ & 87062 & 87119.41013 & $ -0.06594 \%$ \\
 \hline
 $1\times 10^7$ &166104& 166161.85950 & $-0.03483 \%$ \\ 
 \hline
 $5\times 10^7$ &750340& 750274.35740 &{ } $0.00875 \%$ \\ 
 \hline
 $1\times 10^8$ & 1440298 & 1440380.00374 & $-0.00569 \%$ \\ [1ex]
 \hline
\end{tabular}
\caption{$\pi(x;5,1)$ and $\pi_{-1/10}(x^{10/9};5,1)$.}
\end{center}
\end{table}

\begin{table}[H]
\begin{center}
\begin{tabular}{||c| c| c| c||} 
 \hline
 $x$ & $\pi(x;5,3)$ & $\pi_{-1/10}(x^{10/9};5,3)$ &  Error $\%$\\ [0.5ex]
 \hline\hline
 $1\times 10^4$ & 310 & 309.65259 & $0.11207 \%$ \\ 
 \hline
 $5\times 10^4$ & 1290 & 1288.73787 & $ 0.09784 \%$ \\ 
 \hline
 $1\times 10^5$ & 2402& 2406.10885 & $ -0.17106 \%$ { } \\
 \hline
 $5\times 10^5$ & 10382& 10404.21103 & $-0.21394 \%$ { } \\
 \hline
 $1\times 10^6$ & 19665 & 19658.64066 & $0.03234 \%$ \\
 \hline
  $5\times 10^6$ & 87216 & 87152.03182 & $0.07334 \%$ \\
 \hline
 $1\times 10^7$ &166230& 166229.30179 & $0.00042 \%$ \\ 
 \hline
 $5\times 10^7$ &750395& 750428.14712 & $-0.00442 \%$ { } \\ 
 \hline
 $1\times 10^8$ & 1440474 & 1440534.90175 & $-0.00423 \%${ } { } \\ [1ex]
 \hline
\end{tabular}
\caption{$\pi(x;5,3)$ and $\pi_{-1/10}(x^{10/9};5,3)$.}
\end{center}
\end{table}

\subsubsection{Error for $k=-1/12$}
We observe that of the 36 Errors measured in Tables 13-16, $77.78\%$ are negative.

\begin{table}[H]
\begin{center}
\begin{tabular}{||c| c| c| c||} 
 \hline
 $x$ & $\pi(x;4,1)$ & $\pi_{-1/12}(x^{12/11};4,1)$ &  Error $\%$\\ [0.5ex]
 \hline\hline
 $1\times 10^4$ & 609 & 611.17719 & $-0.35750 \%$ \\ 
 \hline
 $5\times 10^4$ & 2549& 2558.04851 & $-0.35498 \%$ \\
 \hline
 $1\times 10^5$ & 4783& 4787.40169 & $-0.09203 \%$ \\
 \hline
 $5\times 10^5$ & 20731& 20762.97056 & $-0.15422 \%$ \\
 \hline
 $1\times 10^6$ & 39175 & 39253.59228  & $ -0.20062 \%$ \\
 \hline
 $5\times 10^6$ & 174193 & 174211.69780  & $-0.01073 \%$ \\
 \hline
 $1\times 10^7$ &332180&  332319.22077  & $ -0.04191 \%$ \\ 
 \hline
 $5\times 10^7$ &1500452&  1500497.92446 & $-0.00306 \%$ \\ 
 \hline
 $1\times 10^8$ & 2880504 & 2880766.12283  & $-0.00910 \%$ \\ [1ex]
 \hline
\end{tabular}
\caption{$\pi(x;4,1)$ and $\pi_{-1/12}(x^{12/11};4,1)$.}
\end{center}
\end{table}

\begin{table}[H]
\begin{center}
\begin{tabular}{||c| c| c| c||} 
 \hline
 $x$ & $\pi(x;4,3)$ & $\pi_{-1/12}(x^{12/11};4,3)$ &  Error $\%$\\ [0.5ex]
 \hline\hline
 $1\times 10^4$ & 619 & 622.36367 & $-0.54340 \%$ \\ 
 \hline
 $5\times 10^4$ &2583 & 2584.93696 & $-0.07499 \%$ \\
 \hline
 $1\times 10^5$ &4808  & 4816.96821 & $-0.18653 \%$ \\
 \hline
 $5\times 10^5$ & 20806 & 20788.86077 &{ } $ 0.08238 \%$ \\
 \hline
 $1\times 10^6$ & 39322 & 39306.76238 &{ } $0.03875 \%$ \\
 \hline
 $5\times 10^6$ & 174319 & 174325.21822 & $-0.00357 \%$ \\
 \hline
 $1\times 10^7$ & 332398 & 332443.67182 & $-0.01374 \%$ \\ 
 \hline
 $5\times 10^7$ &1500681 & 1500833.67573 & $-0.01017 \%$ \\ 
 \hline
 $1\times 10^8$ & 2880950 & 2881059.64847 & $-0.00381 \%$ \\ [1ex]
 \hline
\end{tabular}
\caption{$\pi(x;4,3)$ and $\pi_{-1/12}(x^{12/11};4,3)$.}
\end{center}
\end{table}

\begin{table}[H]
\begin{center}
\begin{tabular}{||c| c| c| c||} 
 \hline
 $x$ & $\pi(x;5,1)$ & $\pi_{-1/12}(x^{12/11};5,1)$ &  Error $\%$\\ [0.5ex]
 \hline\hline
 $1\times 10^4$ & 306 & 308.32261 & $-0.75902 \%$ \\ 
 \hline
 $5\times 10^4$ &1274 & 1286.24176 & $ -0.96089 \%$ \\
 \hline
 $1\times 10^5$ & 2387 & 2392.89814 & $ -0.24709  \%$ \\
 \hline
 $5\times 10^5$ &  10386 & 10367.25069  &{ } $0.18052 \%$ \\
 \hline
 $1\times 10^6$ & 19617 &  19631.27471 & $-0.07277 \%$ \\
 \hline
 $5\times 10^6$ & 87062 & 87117.75683  & $-0.06404 \%$ \\
 \hline
 $1\times 10^7$ & 166104 &  166145.81965 & $ -0.02518 \%$ \\ 
 \hline
 $5\times 10^7$ & 750340 & 750274.07185  &{ } $0.00879 \%$ \\ 
 \hline
 $1\times 10^8$ & 1440298 &  1440333.42281 & $-0.00246 \%$ \\ [1ex]
 \hline
\end{tabular}
\caption{$\pi(x;5,1)$ and $\pi_{-1/12}(x^{12/11};5,1)$.}
\end{center}
\end{table}

\begin{table}[H]
\begin{center}
\begin{tabular}{||c| c| c| c||} 
 \hline
 $x$ & $\pi(x;5,3)$ & $\pi_{-1/12}(x^{12/11};5,3)$ &  Error $\%$\\ [0.5ex]
 \hline\hline
 $1\times 10^4$ & 310 & 310.43590 & $-0.14061 \%$ \\ 
 \hline
 $5\times 10^4$ &1290 & 1290.55374 & $-0.04293 \%$ \\
 \hline
 $1\times 10^5$ &2402  & 2406.74513 & $-0.19755 \%$ \\
 \hline
 $5\times 10^5$ &10382  & 10393.81954  & $-0.11385 \%$ \\
 \hline
 $1\times 10^6$ & 19665 &  19662.29214 &{ } $0.01377  \%$ \\
 \hline
 $5\times 10^6$ & 87216 & 87159.52414  &{ } $ 0.06475 \%$ \\
 \hline
 $1\times 10^7$ & 166230 & 166209.29782  &{ } $0.01245 \%$ \\ 
 \hline
 $5\times 10^7$ & 750395&  750355.12871 &{ } $0.00531 \%$ \\ 
 \hline
 $1\times 10^8$ & 1440474 & 1440579.59405  & $-0.00733 \%$ \\ [1ex]
 \hline
\end{tabular}
\caption{$\pi(x;5,3)$ and $\pi_{-1/12}(x^{12/11};5,3)$.}
\end{center}
\end{table}

\subsubsection{Conclusion} 
 Note that when $k \ > \ 0$, Error exhibits a positive trend, indicating that $\pi_(x^{k+1};m,n)$ tends to be larger $\pi_k(x;m,n)$. The opposite trend holds when $k \ < \ 0$. The explanation for this phenomenon is provided by Theorem \ref{thm:error-pi-mn} and Remark \ref{remark_1}.

\subsection{Appendix: Riemann–Stieltjes Integration}
\label{append-int}
\subsubsection{Riemann-Stieltjes Integral}
Let $f$ be a real-valued function of a real variable on $[a,b]$ where $a,b\in \mathbb{R}$, and let $g$ also be a real-to-real function. Let $P=\{a=x_0 < x_1 < x_2 < \dots < x_n =b\}$ be a partition of $[a,b]$. Consider
\begin{equation*}
    S(f,g,P)\ := \ \sum_{i=1}^{n} f(x_{i}^*) [g(x_{i})-g(x_{i-1})]
\end{equation*}
where $x_{i}^*\in [x_{i-1},x_{i}].$\\

\noindent If $S(f,g,P)$ converges to a constant $L$ when $\max\limits_{1\le i \le n} \{x_{i}-x_{i-1}\}$ approaches $0$, then we define Riemann-Stieltjes Integral as following:
\begin{equation*}
    \int_{a}^{b} f\, dg \ := \ L.
\end{equation*}
The Riemann-Steltjes integral exists if  $g$ is of bounded variation and right-semicontinuous. 
\subsubsection{Applying Riemann-Stieltjes Integration}
Consider
\begin{equation*}
    \pi(x;m,n)-\pi(x-1;m,n) \ =\  \begin{cases}
            1,& \text{if $x$ is prime and $x\ \equiv\  n \Mod m$}, \\
            \\
            0, &\substack{\ds\text{if $x$ is composite or}\\ 
            \ds\text{$x$ is prime where $x\ \not\equiv\  n \Mod m$.}}
            \end{cases}
\end{equation*}
Let $f$ be a real-valued function of a real variable, then 
\begin{equation*}
   \sum_{\substack{p \le x\\ p\  \equiv\  n\Mod{m}}}f(p)\qquad =\qquad \sum_{i=2}^{x} f(i) [\pi(x;m,n)-\pi(x-1;m,n)].
\end{equation*}
By applying the Riemann-Stieltjes Integral, we have
\begin{align}
\nonumber
    \sum_{\substack{p \le x\\ p\  \equiv\  n\Mod{m}}}f(p)\qquad &=\qquad \sum_{i=2}^{x} f(i) [\pi(x;m,n)-\pi(x-1;m,n)]\\ 
    &=\qquad \int_{2}^{x} f(x)\, d\pi(x;m,n).\label{riea-stieljes}
\end{align}
This technique is crucial to evaluate the Riemann-Stieltjes Integral in proofs of Theorem \ref{1:thm} and Theorem \ref{thm:error-pi-mn}.

\subsection{Appendix: Limit Computation}
\label{append-lim}
From Gerard-Washington \cite[Lemma 1]{GW}, we have the following result.\\

\noindent Let $k>-1$ and let $f$ be a differentiable function on $[2,\infty)$ such that 
\begin{enumerate}[label=\roman*.]
    \item $\ds x^{k+1}\exp(-f(x))\to\infty$\quad and 
    \item $\ds x f'(x)\to 0$
\end{enumerate} 
as $x\to\infty$. Then
$\ds\int_{2}^{x}t^k \exp(-f(t))\, dt\ =\ O\left(x^{k+1} \exp(-f(x)) \right)$.\\

\noindent We apply this result with $f(x)=\frac{1}{2}\alpha \sqrt{\log x}$, where $\alpha$ is a positive constant.\\

\noindent We verified the hypotheses in our case by computing the relevant limits for applying the lemma.
\begin{enumerate}[label=\roman*.]
    \item We will find
    \begin{equation*}
 L\ :=\ \lim_{x\to\infty}\left[x^{k+1}\exp\left(-\frac{1}{2}\alpha\sqrt{\log x}\right) \right].
    \end{equation*}

\noindent Let
\[x \ = \ \exp\left(\frac{4y^2}{\alpha^2}\right).\]
When $ y\to\infty$, $ x \to \infty$. Thus,
\begin{align*}
    L \ &= \ \lim_{y\to\infty}\left[\left[\exp\left(\frac{4y^2}{\alpha^2}\right)\right]^{k+1}\exp\left(-\frac{1}{2}\alpha \cdot \frac{2y}{\alpha}\right) \right] \\
    &= \ \lim_{y\to\infty}\left[\exp\left((k+1)\cdot \frac{4y^2}{\alpha^2}\right)\exp\left(-y\right) \right]\\
    &= \ \lim_{y\to\infty}\left[\exp\left(\frac{4k+4}{\alpha^2}\cdot y^2\right)\exp\left(-y\right) \right].
\end{align*}
Since $ k\ >\ -1$, we have $k\ +\ 1\ >\ 0  \Longrightarrow \ 4k\ +\ 4\ >\ 0.$ Hence,
\[\frac{4k+4}{\alpha^2} \ >\ 0,\]
which indicates that
\[y \ \lll \frac{4k+4}{\alpha^2} \cdot y^2 \]
when $y$ approaches $\infty$.\\

\noindent Thus, $\exp\left(\frac{4k+4}{\alpha^2} \cdot y^2\right)$ grows much faster than $\exp\left(-y\right)$ decays, so we conclude that 
\[L \ = \ \lim_{y\to\infty}\left[\exp\left(\frac{4k+4}{\alpha^2}\cdot y^2\right)\exp\left(-y\right) \right] \ = \ \infty.\]
Therefore,
\begin{equation*}
    \lim_{x\to\infty}\left[x^{k+1}\exp\left(-\frac{1}{2}\alpha\sqrt{\log x}\right) \right]\ =\ \infty.
\end{equation*}
\\

    \item Recall that $\ds f(x)\ =\ \frac{1}{2}\alpha\sqrt{\log x}$. Thus
    \begin{align*}
    f'(x)\ &=\ \frac{\alpha}{4x\sqrt{\log x}}\\
    \imply x f'(x)\ &=\ \frac{\alpha}{4\sqrt{\log x}}
    \end{align*}
Since\quad $\ds\lim_{x\to \infty}\sqrt{\log x}\ =\ \infty$,\quad it follows that
\[\lim_{x\to\infty}[x f'(x)]\ =\ 0.\]
\end{enumerate}


\begin{thebibliography}{Prime}

\bibitem[Ap]{Ap} Tom Apostol,\ \emph{Introduction to Analytic Number Theory}, Springer New York,  Undergraduate Texts in Mathematics series (1976).

\bibitem[Ax]{Ax} Christian Axler,\ \emph{On the sum of the first n prime numbers}, JSTOR, Vol \textbf{31}, No. 2(2019), 293-311.

\bibitem[C]{C} Pafnuty L. Chebyshev, \emph{Lettre de M. le professeur Tchébyshev à M. Fuss, sur un nouveau théoreme rélatif aux nombres premiers contenus dans la formes $4n \pm 1$ et $4n \pm 3$}, Bull. de la Classe phys.-math. de l’Acad. Imp. des Sciences St. Petersburg \textbf{11}
(1853), 208.

\bibitem[D]{D} Harold Davenport, \emph{Multiplicative Number Theory}, Springer Graduate Texts in Mathematics, Markham Publishing 
Company, Chicago, IL, (1967).



\bibitem[GW]{GW}Jane Gerard, Lawrence C. Washington,\ \emph{Sums of powers of primes}, Ramanujan J. \textbf{45} (2018), 171-180.

\bibitem[Ko]{Ko} Nikolai M. Korobov, \ \emph{Estimates of trigonometric sums and their applications}, Uspehi Mat. Nauk \textbf{13} (1958), 185-192, in Russian.


\bibitem[Ku]{Ku} Angel Kumchev, \  \emph{Distribution of Prime Numbers}, 2005. Available at \bburl{http://tigerweb.towson.edu/akumchev/distributionofprimesnotes.pdf}.

\bibitem[L]{WJL}William Judson LeVeque,\ \emph{Topıcs in Number Theory \RNum{2}}, Addison-Wesley Publishing Company, Inc. \textbf{2} (1958).

\bibitem[MR]{MR} Jean-Pierre Massias, Guy Robin,\ \emph{Bornes effectives pour certaines fonctions concernant les nombres premiers}, de Théorie des Nombres de Bordeaux, \textbf{8} (1996), 213-238.

\bibitem[P]{P} A. Page, \emph{On the number of primes in an arithmetic progression}, Proc. London Math. Soc. (2) \textbf{39} (1935), 116–141.


\bibitem[RS]{RS} Michael Rubinstein, Peter Sarnak,\  \emph{Chebyshev’s bias}, Experiment. Math. \textbf{3} (1994), 173–197.


\bibitem[Se]{Se} Atle Selberg,\ \emph{An elementary proof of the prime number theorem}, Annals of Mathematics, Second Series \textbf{50} (1949), 305-313.

\bibitem[Si]{Si} Carl L. Siegel, \emph{Über die Classenzahl quadratischer Zahlkörper}, Acta Arithmetica. 
\textbf{1} (1935), 83–86.

\bibitem[Sz]{Sz} Mihály Szalay,\ \emph{On the maximal order in $S_n$ and $S_n^{*}$}, Acta Arithmetica \textbf{37} (1980), 321-331.


\bibitem[V]{V} Ivan M. Vinogradov \ \emph{A new estimate for the function $\zeta(1 + it)$}, Izvestiya Akademii Nauk SSSR. Seriya Matematicheskaya, \textbf{22} (2), (1958), 161–164.

\bibitem[V-P]{V-P} de la Vallée-Poussin,\ \emph{Recherches analytiques la théorie des nombres premiers}, Annales de la Société scientifique de Bruxelles, \textbf{20} (1896), 183-265.

\bibitem[W]{LW}Lawrence C. Washington,\ \emph{Sums of powers of primes II}, preprint, 2022. Available at \bburl{https://arxiv.org/abs/2209.12845v1}


\end{thebibliography}
\end{document}